\newtheorem{theorem}{Theorem}[section]
\newtheorem{lem}{Lemma}[section]
\numberwithin{equation}{section}
\begin{document}

\begin{frontmatter}

% Title, authors and addresses

% use the tnoteref command within \title for footnotes;
% use the tnotetext command for theassociated footnote;
% use the fnref command within \author or \address for footnotes;
% use the fntext command for theassociated footnote;
% use the corref command within \author for corresponding author footnotes;
% use the cortext command for theassociated footnote;
% use the ead command for the email address,
% and the form \ead[url] for the home page:
  \title{Least-squares spectral element preconditioners for fourth order elliptic problems}
%  \tnotetext[label1]{This work was carried out at the Department of Mathematics, Indian Institute of Technology Kanpur,
%  Kanpur-208016, India}
%  \author{Name\corref{cor1}\fnref{label2}}
%  \ead{email address}
% \ead[url]{home page}
% \fntext[label2]{}
% \cortext[cor1]{}
% \address{Address\fnref{label3}}
% \fntext[label3]{}

%\title{Lagrange multiplier method with penalty for elliptic and parabolic interface problems}
% \tnotetext[1]{Lagrange multiplier method with penalty}
% use optional labels to link authors explicitly to addresses:
% \author[label1,label2]{}
% \address[label1]{}
% \address[label2]{}
\author[label2]{Akhlaq~Husain\corref{cor1}}
\ead{akhlaq.husain@bml.edu.in}
\address[label2]{School of Engineering \& Technology, BML Munjal University, Gurgaon-122413, Haryana, India}
\author[label5]{Arbaz~Khan\corref{cor2}}
\ead{arbazkha@gmail.com,arbaz.khan@iwr.uni-heidelberg.de}
%\address[label3]{Department of Mathematics \& Statistics, Indian Institute of Technology, Kanpur 208016, UP, India.}
%\address[label4]{Mathematics Center Heidelberg (MATCH), Ruprecht-Karls-Universit$\ddot{a}$t Heidelberg, 69120 Heidelberg, Germany}
\address[label5]{Interdisziplin$\ddot{a}$res Zentrum f$\ddot{u}$r Wissenschaftliches Rechnen (IWR), Ruprecht-Karls-Universit$\ddot{a}$t Heidelberg, 69120 Heidelberg, Germany}
\cortext[cor1]{This work was carried out during the second author's stay at the LNM Institute of Information Technology
(LNMIIT), Jaipur as assistant professor}
\cortext[cor2]{Corresponding author}

\begin{abstract}
% Text of abstract
The goal of this paper is to propose preconditioners for the system of linear equations that
arises from a discretization of fourth order elliptic problems using spectral element methods.
These preconditioners are constructed using separation of variables and can be diagonalized
and hence easy to invert. For second order elliptic problems this technique has proven to be
very successful and performs better than other preconditioners. We show that these preconditioners
are spectrally equivalent to the quadratic forms by which we approximate them. Numerical result
for the biharmonic problem are presented to validate the theoretical estimates.
\end{abstract}

\begin{keyword}

Fourth order problems\sep preconditioners \sep spectral element method \sep separation of variables
\sep spectral equivalence \sep condition number.
% keywords here, in the form: keyword \sep keyword

% PACS codes here, in the form: \PACS code \sep code

% MSC codes here, in the form: \MSC code \sep code
% or \MSC[2008] code \sep code (2000 is the default)

\end{keyword}

\end{frontmatter}
%\linenumbers
% main text
% \section{}
% \label{}

\section{Introduction}
In this paper we investigate methods for preconditioning the system of linear equations that arises
from a discretization of fourth order elliptic problems using spectral element methods. Preconditioners
are usually constructed and analyzed with the goal of maintaining a well-conditioned system of equations
as the number of unknowns $W$ increases.

Let $\Omega\subset\mathbb{R}^d$ ($d=2,3$) be a bounded curvilinear domain having smooth boundary.
We consider the model fourth order elliptic problem with homogeneous Dirichlet boundary conditions:
\begin{align}\label{eq1.1}
{\mathcal L}u=\Delta^{2}u-\nabla.(a\nabla u)+({b}.\nabla)u+cu&=f \quad\mbox{in}\quad\Omega,\notag\\
u & = \frac{\partial u}{\partial n}=0 \quad \mbox{on} \quad \partial \Omega.
\end{align}
Here, $\partial\Omega$ is the boundary %${\mathcal L}$ is a fourth order linear elliptic operator,
of $\Omega$, $\frac{\partial u}{\partial n}$ is the outer normal derivative on $\partial \Omega$,
and $f\in L^2$. The coefficients ${b},c$ and the entries in the $2\times 2$ matrix $a$ are analytic.
Moreover, $a$ is symmetric, positive definite matrix.  Problem (\ref{eq1.1}) includes the classical
plate bending problem in the theory of elasticity. This type of boundary value problems arise in
structural mechanics, materials science and fluid flow. Such problems are also associated, for example,
with the Cahn-Hilliard model for phase-separation phenomena~\cite{KH}.

Suppose that (\ref{eq1.1}) is discretized using a spectral/finite element or finite difference method
characterized by a mesh-size $h$. This yields an equation
\begin{align}\label{eq1.2}
Au^h=f.
\end{align}
in a finite-dimensional approximation space $V_h$. Here, $A$ is usually symmetric and positive
definite. An effective method for solving (\ref{eq1.2}) consists of first preconditioning $A$ and
then using a convergent iterative method such as the preconditioned conjugate gradient method (PCGM).
% applied to the normal equations.
At every iteration, this
method requires the evaluation of the matrix vector multiplications $Ax$ and $Br$, where $B$ is
another symmetric, positive definite matrix, called \emph{preconditioner}. The number of iterations
required for the iterative method to converge depends on the condition number $\kappa$ defined by
\begin{align}\label{eq1.3}
\kappa=||BA||_{V_h}\:||(BA)^{-1}||_{V_h}=\frac{\lambda_{\mbox{max}}(BA)}{\lambda_{\mbox{min}}(BA)},
\end{align}
where $||\cdot||_{V_h}$ denotes the norm in $V_h$ and $\lambda_{\mbox{max}}, \lambda_{\mbox{min}}$ are the
largest, least eigenvalues respectively. It is well known that the number of iterations, required to
achieve a given tolerance in the energy norm is proportional to $\sqrt{\kappa}$. The condition number
and hence the computational cost increases rapidly as $h\rightarrow 0$ unless a suitable preconditioner
is employed. Various preconditioners were developed and analyzed in~\cite{BAN,BRAM,DBR}
(as well as references cited therein) and shown to be extremely effective.

The preconditioners described in this paper are obtained in~\cite{KAH} for solving fourth order elliptic
problems which are defined using a quadratic form which measures the $H^4$ norm of the spectral element
function representation of the solution.
These preconditioners are obtained in the same way as in~\cite{DKU,SKT2} by computing the residuals in
the normal equations, but with homogeneous boundary data and the homogenous form of the partial differential
equation. Hence, the algorithm is quite simple and easy to implement. We show that there exists a new
diagonal preconditioner using separation of variables technique.

It is shown in~\cite{KAH} that the condition number of the preconditioned system grows like $O((\ln W)^4)$,
where $W$ denotes the polynomial degree and since we have mapped all elements in the domain $\Omega$ onto
the master square $S=(-1,1)^2$, therefore, we present the numerical results for a single reference element
only. Unless otherwise stated, all the generic constants of approximation appearing in this paper are
independent of $N$ and depend on the shape regularity of $\Omega$. Here, $N$ denotes the number of elements
in $\Omega$. For computational simplicity, we assume that the degrees of the approximating polynomials are
uniform in both directions. However, we can allow for non-uniform distribution of polynomial degree in each
direction with a reduced number of degrees of freedom. This is usually done in the anisotropic case (e.g.
in presence of singularities) where one uses lower order polynomials near singularities and higher order
polynomials away from singularities to increase the effectiveness of the preconditioner.

The contents of this paper are now provided. In Section $2$ preconditioners for elliptic problems are
examined and numerical results are presented. In Section $3$ we describe solution techniques for solving
the system of linear equations arising from the spectral element discretization. Concluding remarks are
provided in Section $4$.

\section{Preconditioners}
Our construction of preconditioners is similar to that for second order elliptic and parabolic problems
(see~\cite{DBR,DKU}). We construct a preconditioner $\mathcal{B}(u)$ on each of the elements in $\Omega$.
We shall prove (as in~\cite{DBR}) that there is another quadratic form $\mathcal{C}(u)$ which is spectrally
equivalent to $\mathcal{B}(u)$ and which can be easily diagonalized using the separation of variables.
Then the matrix corresponding to the quadratic form $\mathcal{C}(u)$ will be easy to invert.

The preconditioner which needs to be examined corresponds to the quadratic form
\begin{align}\label{eq5.7}
\mathcal B({u}) =||{u}||^{2}_{H^{4}(S)}\;,
\end{align}
where ${u}={u}(\xi,\eta)$ is a polynomial of degree $W$ in $\xi$ and $\eta$ separately.
Let ${u}(\xi,\eta)$ be the spectral element function, defined on $S$, as
\begin{equation}\label{eq5.8}
u(\xi,\eta)=\sum_{i=0}^W\sum_{j=0}^W a_{i,j}L_{i}(\xi)
L_{j}(\eta).
\end{equation}
\noindent
Here, $L_{i}(\cdot)$ denotes the Legendre polynomial of degree $i$.
\newline
The quadratic form $\mathcal B({u})$ can be written as
\begin{equation}\label{eq5.9}
\mathcal{B}({u})=\int_{S}\sum_{|\alpha|\le 4}{|D^{\alpha}_{\xi,\eta}{u}|}^2d\xi d\eta.
\end{equation}
%We will show that there is another quadratic form $\mathcal{C}({u})$ which is spectrally
%equivalent to $\mathcal{B}({u})$ and can be diagonalized using separation of variables.

%Let $u(\xi,\eta)$ be a polynomial as in (\ref{eq5.8}).
 Define the
quadratic form
\begin{align}\label{eq5.19}
\mathcal{C}(u)=\int_S(&u_{\xi\xi\xi\xi}^2+u_{\eta\eta\eta\eta}^2
+u_{\xi\xi\xi}^2+u_{\eta\eta\eta}^2+u_{\xi\xi}^2
+u_{\eta\eta}^2+u_{\xi}^2+u_{\eta}^2+u^2)\:d\xi d\eta\;.
\end{align}
We now show that the quadratic form $\mathcal{C}(u)$ is spectrally equivalent to the
quadratic form $\mathcal{B}(u)$, defined in (\ref{eq5.7}) and can be diagonalized in
the basis $\psi_{i,j}(\xi,\eta)$. Note that ${\{\psi_{i,j}(\xi,\eta)\}}_{i,j}$
is the tensor product of the polynomials $\phi_i(\xi)$ and $\phi_j(\eta)$.% The eigenvalue
%$\mu_{i,j}$ corresponding to the eigenvector $\psi_{i,j}$ is given by the relation
%\begin{equation}\label{eq5.20}
%\mu_{i,j}=\mu_i+\mu_j+1\;.
%\end{equation}
%Hence, the matrix corresponding to the quadratic form $\mathcal{C}(u)$ is easy to invert.

To prove that $\mathcal{B}(u)$ and $\mathcal{C}(u)$ are spectrally equivalent we need to show that
there is an extension $U({\xi,\eta})$ of $u({\xi,\eta})$ such that
$U({\xi,\eta})\in H^{4}(\mathbb{R}^2)$ and satisfies the estimate
\begin{align*}
\int_{\mathbb{R}^2}&\left(U_{\xi\xi\xi\xi}^2+U_{\eta\eta\eta\eta}^2
+U_{\xi\xi}^2+U_{\eta\eta}^2+U^2\right)\:d\xi d\eta
 \leq K\int_{S} \left(u_{\xi\xi\xi\xi}^2+u_{\eta\eta\eta\eta}^2
+u_{\xi\xi}^2+u_{\eta\eta}^2+u^2\right)\:d\xi d\eta.
\end{align*}
Here, $K$ is a constant which is independent of $N$.

To extend $u({\xi,\eta})$ defined on $(-1,1)\times (-1,1)$ the method of successive reflections
is used. In the first step an extension $U_1({\xi,\eta})$ is obtained by reflecting $u({\xi,\eta})$
along the line $\eta=1$. This construction is similar to that in Theorem $5.19$ of~\cite{ADAM}.

For $\nu>0$ define
\[U_1(\xi,1+\nu)=\displaystyle\sum_{l=1}^{5} a_l u(\xi,1-l\nu)\Theta(\nu),\quad k=0,1,\ldots,4.\]
Here, $\Theta(\nu)$ is a $C^{\infty}$ function such that
\[\Theta(\nu)=1\quad \mbox{for}\quad \nu\leq \frac{1}{6}, \quad \Theta(\nu)=0\quad \mbox{for}\quad
\nu\geq \frac{1}{3}.\]
In addition, the coefficients $a_i$, $1\leq i \leq 5$ are chosen to satisfy the $5\times 5$ system of
linear equations
\[\displaystyle\sum_{l=1}^{5}(-l)^k a_l =1, \quad \quad k=0,1,\ldots,4.\]
%Now, we define the extension $U_1({\xi,\eta})$ of $u({\xi,\eta})$
Thus, the extension $U_1({\xi,\eta})$ of $u({\xi,\eta})$ can be written as
\begin{align}\label{eq2.15a}
U_1({\xi,\eta}) =\left\{\begin{array}{cc}
u({\xi,\eta}), \hspace{2.0cm} -1<\eta<1\\
\displaystyle\sum_{l=1}^{5} a_l u(\xi,1-l(\eta-1))\Theta(\eta-1),\quad
\eta\geq 1. \end{array} \right.
\end{align}
Therefore, using (\ref{eq2.15a}) we can write
\begin{align}
\int_{-1}^{\infty}&\int_{-1}^{1}\left((U_1)^2_{\xi\xi\xi\xi}+(U_1)^2_{\eta\eta\eta\eta}
+(U_1)^2_{\xi\xi}+(U_1)^2_{\eta\eta}+(U_1)^2\right)\:d\xi d\eta.\notag\\
&=\int_{S}\left(u^2_{\xi\xi\xi\xi}+u^2_{\eta\eta\eta\eta}+u^2_{\xi\xi}
+u^2_{\eta\eta}+u^2\right)\:d\xi d\eta\notag\\
&+\int_{1}^{\infty}\int_{-1}^{1}\left((U_1)^2_{\xi\xi\xi\xi}+(U_1)^2_{\eta\eta\eta\eta}
+(U_1)^2_{\xi\xi}+(U_1)^2_{\eta\eta}+(U_1)^2\right)\:d\xi d\eta\notag
\end{align}
Hence, we obtain %there exists a constant $K_1$ independent of $N$ such that
\begin{align}\label{eq2.15}
\int_{-1}^{\infty}&\int_{-1}^{1}\left((U_1)^2_{\xi\xi\xi\xi}+(U_1)^2_{\eta\eta\eta\eta}
+(U_1)^2_{\xi\xi}+(U_1)^2_{\eta\eta}+(U_1)^2\right)\:d\xi d\eta.\notag\\
&\leq K_1\int_{S}\left(u^2_{\xi\xi\xi\xi}+u^2_{\eta\eta\eta\eta}+u^2_{\eta\eta\eta}
+u^2_{\xi\xi}+u^2_{\eta\eta}+u^2_{\eta}+u^2\right)\:d\xi d\eta.
\end{align}
Here $K_1$ is a constant which is independent of $W$. \\
Appyling Lemma $5.6$ of~\cite{ADAM}, the following estimates hold
%there are constants $C_1, C_2$ such that
\begin{align}\label{eq2.16}
\int_{-1}^{1}u_{\eta}^2(\xi,\eta)\:d\eta \leq
C_1 \int_{-1}^{1}(u_{\eta\eta}^2+u^2)\:d\eta\;,
\end{align}
and
\begin{align}\label{eq2.17}
\int_{-1}^{1}u_{\eta\eta\eta}^2(\xi,\eta)\:d\eta \leq
C_2 \int_{-1}^{1}(u_{\eta\eta\eta\eta}^2+u^2)\:d\eta,
\end{align}
where $C_1$ and $C_2$ are constants.\\
Integrating (\ref{eq2.16}), (\ref{eq2.17}) with respect to $\xi$, we get
\begin{align}\label{eq2.18}
\int_{S}u_{\eta}^2(\xi,\eta)\:d\xi d\eta \leq
C_1 \int_{S}(u_{\eta\eta}^2+u^2)\:d\xi d\eta\;,
\end{align}
and
\begin{align}\label{eq2.19}
\int_{S}u_{\eta\eta\eta}^2(\xi,\eta)\:d\xi d\eta \leq
C_2 \int_{S}(u_{\eta\eta\eta\eta}^2+u^2)\:d\xi d\eta.
\end{align}
Combining (\ref{eq2.15}), (\ref{eq2.16}) and (\ref{eq2.17}), imply the following estimate
\begin{align}\label{eq2.20}
\int_{-1}^{\infty}&\int_{-1}^{1}\left((U_1)^2_{\xi\xi\xi\xi}
+(U_1)^2_{\eta\eta\eta\eta}+(U_1)^2_{\xi\xi}
+(U_1)^2_{\eta\eta}+(U_1)^2\right)\:d\xi d\eta
\leq C \int_{S}\left(u^2_{\xi\xi\xi\xi}+u^2_{\eta\eta
\eta\eta}+u^2_{\xi\xi}+u^2_{\eta\eta}+u^2\right)
\:d\xi d\eta.
\end{align}
Here, $C$ is a generic constant. We are now in a position to prove the lemma.
\begin{lem}\label{lem21}
Let $u(\xi,\eta)$ be the polynomial as defined in (\ref{eq5.8}). Then there is an
extension $(Eu)(\xi,\eta)=U(\xi,\eta)$ of $u(\xi,\eta)$ such that $U(\xi,\eta)\in
H^4(\mathbb{R}^2)$ and satisfies the following estimate
\begin{align}\label{eq2.21}
\mathop{\int}_{\mathbb{R}^2}&\left(U^2_{\xi\xi\xi\xi}
+U^2_{\eta\eta\eta\eta}+U^2_{\xi\xi}
+U^2_{\eta\eta}+U^2\right)\:d\xi d\eta
\leq K \mathop{\int}_{S}\left(u^2_{\xi\xi\xi\xi}
+u^2_{\eta\eta\eta\eta}+u^2_{\xi\xi}
+u^2_{\eta\eta}+u^2\right)\:d\xi d\eta,
\end{align}
\end{lem}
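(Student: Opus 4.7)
The plan is to iterate the one-edge reflection that produced $U_1$ across all four sides of $S$, obtaining successive extensions $U_1,U_2,U_3,U_4=U$, each in $H^4$ across the newly reflected edge, and to chain four copies of the estimate (\ref{eq2.20}) to conclude.

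Starting from $U_1$ on $(-1,1)\times(-1,\infty)$ satisfying (\ref{eq2.20}), I would first reflect across $\eta=-1$ by setting, for $\nu>0$,
\begin{equation*}
U_2(\xi,-1-\nu)=\sum_{l=1}^{5} a_l\, U_1(\xi,-1+l\nu)\,\Theta(\nu),
\end{equation*}
and $U_2=U_1$ elsewhere; next, reflect $U_2$ across $\xi=1$ by
\begin{equation*}
U_3(1+\nu,\eta)=\sum_{l=1}^{5} a_l\, U_2(1-l\nu,\eta)\,\Theta(\nu),
\end{equation*}
and finally reflect $U_3$ across $\xi=-1$ by the analogous formula to obtain $U=U_4$, which has compact support in $\mathbb{R}^2$ thanks to the cutoff $\Theta$. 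The coefficients $a_l$ are the same in each step because the matching conditions $\sum_{l}(-l)^k a_l=1$ for $k=0,\ldots,4$ force the $k$-th normal derivative of the reflected piece to coincide with that of the original function on each reflection line, delivering $C^4$, hence $H^4$, matching across every edge; the $C^{\infty}$ cutoff $\Theta$ truncates the extension well before the next reflection is performed, so no compatibility issues arise at corners.

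At each of the three additional reflections, the argument leading to (\ref{eq2.20}) applies verbatim with $\xi,\eta$ and signs permuted: expanding the reflection ansatz and changing variables bounds the $H^4$-type integrand of the new extension on the freshly added strip by the same integrand of the previous extension on a strip adjacent to the reflection line, plus one first-order and one third-order normal-derivative term. As in the transition (\ref{eq2.16})--(\ref{eq2.19}), these lower-order contributions are absorbed into the fourth-order and $L^2$ terms by invoking Lemma 5.6 of~\cite{ADAM} in the normal variable together with Fubini in the tangential one. Composing the four resulting inequalities yields
\begin{equation*}
\int_{\mathbb{R}^2}\left(U_{\xi\xi\xi\xi}^2+U_{\eta\eta\eta\eta}^2+U_{\xi\xi}^2+U_{\eta\eta}^2+U^2\right)d\xi\,d\eta \leq K\int_{S}\left(u_{\xi\xi\xi\xi}^2+u_{\eta\eta\eta\eta}^2+u_{\xi\xi}^2+u_{\eta\eta}^2+u^2\right)d\xi\,d\eta,
\end{equation*}
with $K$ independent of $W$ and $N$, since every intermediate constant depends only on the fixed data $a_l,\Theta,C_1,C_2$.

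The main difficulty is organisational rather than analytic: one must verify, at each subsequent reflection, that the right-hand side of the intermediate estimate remains inside the closed class of integrands $(u_{\xi\xi\xi\xi}^2+u_{\eta\eta\eta\eta}^2+u_{\xi\xi}^2+u_{\eta\eta}^2+u^2)$ over $S$. This is slightly delicate for the reflections in the $\xi$ direction, because they produce $\xi$-derivatives of $U_2$ rather than of $u$ directly; one handles them by invoking the bound already established for $U_2$ together with Adams' inequality applied now in $\xi$. Beyond this bookkeeping, no genuinely new analytic ingredient is needed on top of what the construction of $U_1$ already provides.
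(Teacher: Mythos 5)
Your proposal follows essentially the same route as the paper's proof: successive reflections of $u$ across $\eta=1$, $\eta=-1$, $\xi=1$, $\xi=-1$ with the same coefficients $a_l$ and cutoff $\Theta$, each step yielding an estimate like (\ref{eq2.20}), and the four inequalities chained with $K=K_1K_2K_3K_4$. It is correct; in fact you make explicit a point the paper leaves implicit, namely that the odd-order normal-derivative terms produced by the $\xi$-direction reflections are absorbed by applying the Adams interpolation inequality in $\xi$ to the already-constructed extension.
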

where $K$ is a constant independent of $W$.
\begin{proof}
Assume that $U_1(\xi,\eta)$ be the extension of $u(\xi,\eta)$, defined on
$(-1, 1)\times(-1,\infty)$, obtained by reflecting $u(\xi,\eta)$ about the line
$\eta = 1$. Then
\begin{align}\label{eq2.22}
\int_{-1}^{\infty}&\int_{-1}^{1}\left((U_1)^2_{\xi\xi\xi\xi}
+(U_1)^2_{\eta\eta\eta\eta}+(U_1)^2_{\xi\xi}
+(U_1)^2_{\eta\eta}+(U_1)^2\right)\:d\xi d\eta\notag\\
&\leq K_1\int_{S}\left(u^2_{\xi\xi\xi\xi}+u^2_{\eta\eta
\eta\eta}+u^2_{\xi\xi}+u^2_{\eta\eta}+u^2\right)
\:d\xi d\eta.
\end{align}
Let $U_2(\xi,\eta)$ to be the extension of $U_1(\xi,\eta)$,
defined on $(-1, 1)\times(-\infty,\infty)$, obtained by reflecting $U_1(\xi,\eta)$
about the line $\eta=-1$ then we have
\begin{align}\label{eq2.23}
\int_{-\infty}^{\infty}&\int_{-1}^{1}\left((U_2)^2_{\xi\xi\xi\xi}
+(U_2)^2_{\eta\eta\eta\eta}+(U_2)^2_{\xi\xi}
+(U_2)^2_{\eta\eta}+(U_2)^2\right)\:d\xi d\eta\notag\\
&\leq K_2\int_{-1}^{\infty}\int_{-1}^{1}\left((U_1)^2_{\xi\xi\xi\xi}
+(U_1)^2_{\eta\eta\eta\eta}+(U_1)^2_{\xi\xi}
+(U_1)^2_{\eta\eta}+(U_1)^2)\right)\:d\xi d\eta.
\end{align}
Let $U_3(\xi,\eta)$ to be the extension of $U_2(\xi,\eta)$, defined on
$(-1, \infty)\times(-\infty,\infty)$, obtained by reflecting $U_2(\xi,\eta)$
about the line $\xi=1$. Clearly,
\begin{align}\label{eq2.24}
\int_{-\infty}^{\infty}&\int_{-1}^{\infty}\left((U_3)^2_{\xi\xi\xi\xi}
+(U_3)^2_{\eta\eta\eta\eta}+(U_3)^2_{\xi\xi}
+(U_3)^2_{\eta\eta}+(U_3)^2\right)\:d\xi d\eta\notag\\
&\leq K_3\int_{-\infty}^{\infty}\int_{-1}^{1}\left((U_2)^2_{\xi\xi\xi\xi}
+(U_2)^2_{\eta\eta\eta\eta}+(U_2)^2_{\xi\xi}
+(U_2)^2_{\eta\eta}+(U_2)^2)\right)\:d\xi d\eta.
\end{align}
Finally, let $U(\xi,\eta)$ to be the extension of $U_3(\xi,\eta)$, defined on
$(-\infty, \infty)\times(-\infty,\infty)$, obtained by reflecting $U_3(\xi,\eta)$
about the line $\xi=-1$. It follows that the estimate
\begin{align}\label{eq2.25}
\int_{-\infty}^{\infty}&\int_{-\infty}^{\infty}\left(U^2_{\xi\xi\xi\xi}
+U^2_{\eta\eta\eta\eta}+U^2_{\xi\xi}
+U^2_{\eta\eta}+U^2\right)\:d\xi d\eta\notag\\
&\leq K_4\int_{-\infty}^{\infty}\int_{-1}^{\infty}\left((U_3)^2_{\xi\xi\xi\xi}
+(U_3)^2_{\eta\eta\eta\eta}+(U_3)^2_{\xi\xi}
+(U_3)^2_{\eta\eta}+(U_3)^2)\right)\:d\xi d\eta.
\end{align}
Combining (\ref{eq2.22}), (\ref{eq2.23}), (\ref{eq2.24}) and (\ref{eq2.25}) and choosing $K=K_1K_2K_3K_4$, imply the final estimate
(\ref{eq2.21}).  %we obtain with $K=K_1K_2K_3K_4$.
\end{proof}
\begin{theorem}\label{them11}
The quadratic forms $\mathcal{B}(u)$ and $\mathcal{C}(u)$ are spectrally equivalent.
\end{theorem}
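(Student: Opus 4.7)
The plan is to establish both inequalities $c_1\,\mathcal{C}(u) \le \mathcal{B}(u) \le c_2\,\mathcal{C}(u)$ with constants independent of the polynomial degree $W$. One direction is immediate from the definitions: every summand in $\mathcal{C}(u)$ is of the form $|\partial_\xi^k u|^2$ or $|\partial_\eta^k u|^2$ with $k\le 4$ and hence already appears in the sum $\sum_{|\alpha|\le 4}|D^{\alpha}_{\xi,\eta}u|^2$ that defines $\mathcal{B}(u)$ in (\ref{eq5.9}). Thus $\mathcal{C}(u)\le \mathcal{B}(u)$ pointwise under the integral, and only the reverse direction requires real work.

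For the bound $\mathcal{B}(u)\le K\,\mathcal{C}(u)$, the idea is to trade the bounded domain $S$ for the whole plane, where Fourier analysis is available. First I would invoke Lemma \ref{lem21} to produce the extension $U\in H^4(\mathbb{R}^2)$ of $u$; the right-hand side of (\ref{eq2.21}) is a sum of pure-derivative integrals of $u$ over $S$, each of which is one of the summands in $\mathcal{C}(u)$, so
\begin{equation*}
\int_{\mathbb{R}^2}\!\bigl(U_{\xi\xi\xi\xi}^2+U_{\eta\eta\eta\eta}^2+U_{\xi\xi}^2+U_{\eta\eta}^2+U^2\bigr)\,d\xi\,d\eta\ \le\ K\,\mathcal{C}(u).
\end{equation*}
Next I would apply Plancherel on $\mathbb{R}^2$. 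Writing $\widehat U$ for the Fourier transform and (with slight abuse) $(\xi,\eta)$ for the dual variables, one has
\begin{equation*}
\|U\|_{H^4(\mathbb{R}^2)}^2\ \asymp\ \int_{\mathbb{R}^2}(1+\xi^2+\eta^2)^4\,|\widehat U|^2\,d\xi\,d\eta,
\end{equation*}
while the pure-derivative quantity bounded above corresponds, up to constants, to the Fourier weight $1+\xi^4+\eta^4+\xi^8+\eta^8$. The elementary pointwise inequality $(1+\xi^2+\eta^2)^4\le C\,(1+\xi^8+\eta^8)$, obtained by applying Young's inequality termwise to the multinomial expansion, then yields $\|U\|_{H^4(\mathbb{R}^2)}^2\le C\,\mathcal{C}(u)$. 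Since $u=U|_S$, this gives $\mathcal{B}(u)=\|u\|_{H^4(S)}^2\le \|U\|_{H^4(\mathbb{R}^2)}^2\le C\,\mathcal{C}(u)$, closing the argument.

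The only step that is not routine bookkeeping is the Fourier-multiplier inequality above, i.e.\ the claim that pure fourth-order derivatives on $\mathbb{R}^2$ (together with the $L^2$ norm) dominate every mixed derivative of total order at most $4$; this is standard but is where one must commit to the fact that $\mathcal{C}$ need not carry mixed-derivative terms at all. The essential point is that the resulting constant $K$ is independent of $W$, because neither Lemma \ref{lem21} nor the Fourier estimate sees the polynomial nature of $u$.
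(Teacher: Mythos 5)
Your proposal is correct and follows essentially the same route as the paper: the easy inequality $\mathcal{C}(u)\le\mathcal{B}(u)$ by inclusion of terms, and the reverse inequality by extending $u$ to $U\in H^4(\mathbb{R}^2)$ via Lemma \ref{lem21} and then dominating mixed derivatives by pure ones on the Fourier side. The only (cosmetic) difference is that you package the Fourier step as a single multiplier bound $(1+\alpha^2+\beta^2)^4\le C\,(1+\alpha^8+\beta^8)$, whereas the paper bounds each mixed derivative ($u_{\xi\xi\eta\eta}$, $u_{\xi\eta\eta\eta}$, $u_{\xi\xi\xi\eta}$, etc.) separately by Cauchy--Schwarz and AM--GM in Fourier space.
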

\begin{proof}
Assume that $U(\xi,\eta)$ be the extension of $u(\xi,\eta)$ as defined in Lemma
\ref{lem21} with  $U(\xi,\eta)|_S=u(\xi,\eta)$.
% Then $u(\xi,\eta)$ is the restriction of $U(\xi,\eta)$ to $S$.
Furthermore, $U(\xi,\eta)\in H^4(\mathbb{R}^2)$ and satisfies the following estimate
\begin{align*}
\mathop{\int}_{\mathbb{R}^2}&\left(U^2_{\xi\xi\xi\xi}+U^2_{\eta\eta\eta\eta}
+U^2_{\xi\xi}+U^2_{\eta\eta}+U^2\right)\:d\xi d\eta\notag\\
&\leq K\mathop{\int}_{S}\left(u^2_{\xi\xi\xi\xi}+u^2_{\eta\eta\eta\eta}+u^2_{\xi\xi}
+u^2_{\eta\eta}+u^2\right)\:d\xi d\eta.
\end{align*}
Let $\widehat{U}(\alpha,\beta)$ be the Fourier transform of $U(\xi,\eta)$.
Then
\begin{align*}
\mathop{\int}_{\mathbb{R}^2} U^2_{\xi\xi\eta\eta}\:d\xi d\eta
&=\mathop{\int}_{\mathbb{R}^2}|\alpha^2\beta^2|^2|\widehat{U}(\alpha,\beta)|^2 \:d\alpha d\beta\leq \left(\mathop{\int}_{\mathbb{R}^2}|\alpha^4|^2|\widehat{U}(\alpha,\beta)|^2
\:d\alpha d\beta\right)^{\frac{1}{2}}\left(\mathop{\int}_{\mathbb{R}^2}|\beta^4|^2
|\widehat{U}(\alpha,\beta)|^2\:d\alpha d\beta\right)^{\frac{1}{2}}
\end{align*}
using Cauchy-Schwarz inequality.\\
Applying inverse Fourier transform, implies the following estimate
%Hence, the estimate is as follows:
\begin{align*}
\mathop{\int}_{\mathbb{R}^2} |U_{\xi\xi\eta\eta}|^2\:d\xi d\eta
&\leq \left(\mathop{\int}_{\mathbb{R}^2}|{U}_{\xi\xi\xi\xi}|^2
\:d\xi d\eta\right)^{\frac{1}{2}}\left(\mathop{\int}_{\mathbb{R}^2}
|{U}_{\eta\eta\eta\eta}|^2\:d\xi d\eta\right)^{\frac{1}{2}}.
\end{align*}
Using AM-GM inequality, we obtain
%Thus, we obtain
\begin{align}\label{newth1}
\mathop{\int}_{\mathbb{R}^2}|U_{\xi\xi\eta\eta}|^2\:d\xi d\eta
&\leq \frac{1}{2}\left(\mathop{\int}_{\mathbb{R}^2}|{U}_{\xi\xi\xi\xi}|^2
+|{U}_{\eta\eta\eta\eta}|^2\:d\xi d\eta\right).
\end{align}
Therefore, we have
\begin{align*}
\mathop{\int}_{S}|u_{\xi\xi\eta\eta}|^2\:d\xi d\eta
&\leq \frac{1}{2}\left(\mathop{\int}_{\mathbb{R}^2}|{U}_{\xi\xi\xi\xi}|^2
+|{U}_{\eta\eta\eta\eta}|^2\:d\xi d\eta\right).
\end{align*}
Inserting the result of Lemma $2.1$, the following estimate holds
\begin{align}\label{newth4}
\mathop{\int}_{S}|u_{\xi\eta\eta\eta}|^2\:d\xi d\eta
&\leq \frac{K}{2}\left(\mathop{\int}_{S}|u_{\xi\xi\xi\xi}|^2
+|{u}_{\eta\eta\eta\eta}|^2+|u_{\xi\xi}|^2
+|{u}_{\eta\eta}|^2+|u|^2\:d\xi d\eta\right).
\end{align}
%using Lemma $2.1$.\\
Now, we estimate the following term
\begin{align*}
\mathop{\int}_{\mathbb{R}^2} U^2_{\xi\eta\eta\eta}\:d\xi d\eta
&=\mathop{\int}_{\mathbb{R}^2}|\alpha\beta^3|^2|\widehat{U}(\alpha,\beta)|^2 \:d\alpha d\beta\leq \left(\mathop{\int}_{\mathbb{R}^2}|\alpha\beta|^4|\widehat{U}(\alpha,\beta)|^2
\:d\alpha d\beta\right)^{\frac{1}{2}}\left(\mathop{\int}_{\mathbb{R}^2}|\beta^4|^2
|\widehat{U}(\alpha,\beta)|^2\:d\alpha d\beta\right)^{\frac{1}{2}}.
\end{align*}
using Cauchy-Schwarz inequality.\\
Applying inverse fourier transform, implies the following estimate
\begin{align*}
\mathop{\int}_{\mathbb{R}^2} |U_{\xi\eta\eta\eta}|^2\:d\xi d\eta
&\leq \left(\mathop{\int}_{\mathbb{R}^2}|{U}_{\xi\xi\eta\eta}|^2
\:d\xi d\eta\right)^{\frac{1}{2}}\left(\mathop{\int}_{\mathbb{R}^2}
|{U}_{\eta\eta\eta\eta}|^2\:d\xi d\eta\right)^{\frac{1}{2}}.
\end{align*}
Using AM-GM inequality, we obtain
\begin{align*}
\mathop{\int}_{\mathbb{R}^2}|U_{\xi\eta\eta\eta}|^2\:d\xi d\eta
&\leq \frac{1}{2}\left(\mathop{\int}_{\mathbb{R}^2}|{U}_{\xi\xi\eta\eta}|^2
+|{U}_{\eta\eta\eta\eta}|^2\:d\xi d\eta\right).
\end{align*}
Moreover, we obtain
\begin{align}\label{newth2}
\mathop{\int}_{S}|u_{\xi\eta\eta\eta}|^2\:d\xi d\eta
&\leq \frac{1}{2}\left(\mathop{\int}_{\mathbb{R}^2}|{U}_{\xi\xi\eta\eta}|^2
+|{U}_{\eta\eta\eta\eta}|^2\:d\xi d\eta\right).
\end{align}
Applying (\ref{newth1}) and  Lemma $2.1$ in (\ref{newth2}), implies
\begin{align}\label{eq2.26}
\mathop{\int}_{S}|u_{\xi\eta\eta\eta}|^2\:d\xi d\eta
&\leq \frac{K}{2}\left(\mathop{\int}_{S}|u_{\xi\xi\xi\xi}|^2
+|{u}_{\eta\eta\eta\eta}|^2+|u|^2\:d\xi d\eta\right).
\end{align}
Similarly,
\begin{align*}
\mathop{\int}_{\mathbb{R}^2} U^2_{\xi\xi\xi\eta}\:d\xi d\eta
&=\mathop{\int}_{\mathbb{R}^2}|\alpha^3\beta|^2|\widehat{U}(\alpha,\beta)|^2 \:d\alpha d\beta\leq \left(\mathop{\int}_{\mathbb{R}^2}|\alpha^4|^2|\widehat{U}(\alpha,\beta)|^2
\:d\alpha d\beta\right)^{\frac{1}{2}}\left(\mathop{\int}_{\mathbb{R}^2}|\alpha\beta|^4
|\widehat{U}(\alpha,\beta)|^2\:d\alpha d\beta\right)^{\frac{1}{2}}.
\end{align*}
using Cauchy-Schwarz inequality.\\
Proceeding as above we obtain,
\begin{align}\label{newth3}
\mathop{\int}_{S}|u_{\xi\xi\xi\eta}|^2\:d\xi d\eta
&\leq \frac{1}{2}\left(\mathop{\int}_{\mathbb{R}^2}\left(|{U}_{\xi\xi\eta\eta}|^2
+|{U}_{\eta\eta\eta\eta}|^2\right)\:d\xi d\eta\right).
\end{align}
Again, Applying (\ref{newth1}) and  Lemma $2.1$ in (\ref{newth3}), implies
\begin{align}\label{eq2.27}
\mathop{\int}_{S}|u_{\xi\xi\xi\eta}|^2\:d\xi d\eta
&\leq \frac{K}{2}\mathop{\int}_{S}\left(|u_{\xi\xi\xi\xi}|^2
+|{u}_{\eta\eta\eta\eta}|^2+|u|^2\right)\:d\xi d\eta.
\end{align}
In a similar way it follows that,
%\begin{align}\label{eq2.28}
%\mathop{\int}_{S}|u_{\xi\xi\eta\eta}|^2\:d\xi d\eta
%&\leq \frac{K}{2}\mathop{\int}_{S}\left(|u_{\xi\xi\xi\xi}|^2
%+|{u}_{\xi\xi\eta\eta}|^2+|u|^2\right)\:d\xi d\eta\;,
%\end{align}
%\begin{align}\label{eq2.29}
%\mathop{\int}_{S}|u_{\xi\eta\xi\eta}|^2\:d\xi d\eta
%&\leq \frac{K}{2}\mathop{\int}_{S}\left(|u_{\xi\xi\xi\xi}|^2
%+|{u}_{\xi\xi\eta\eta}|^2+|u|^2\right)\:d\xi d\eta\;,
%\end{align}
\begin{align}\label{eq2.30}
\mathop{\int}_{S}|u_{\xi\eta\eta}|^2\:d\xi d\eta
&\leq \frac{K}{2}\mathop{\int}_{S}\left(|u_{\xi\xi}|^2
+|{u}_{\eta\eta\eta\eta}|^2+|u|^2\right)\:d\xi d\eta\;,
\end{align}
\begin{align}\label{eq2.31}
\mathop{\int}_{S}|u_{\xi\xi\eta}|^2\:d\xi d\eta
&\leq \frac{K}{2}\mathop{\int}_{S}\left(|u_{\xi\xi\xi\xi}|^2
+|{u}_{\eta\eta}|^2+|u|^2\right)\:d\xi d\eta\;,
\end{align}
and
\begin{align}\label{eq2.32}
\mathop{\int}_{S}|u_{\xi\eta}|^2\:d\xi d\eta
&\leq \frac{K}{2}\mathop{\int}_{S}\left(|u_{\xi\xi}|^2
+|{u}_{\eta\eta}|^2+|u|^2\right)\:d\xi d\eta.
\end{align}
Here, $K$ is a generic constant independent of $N$. Using (\ref{newth4}-\ref{eq2.32}),
we conclude that
\begin{align*}
\frac{1}{M}||u||_{H^4(S)}^2 & \leq \mathop{\int}_{S}(|u_{\xi\xi\xi\xi}|^2
+|u_{\eta\eta\eta\eta}|^2+|u_{\xi\xi\xi}|^2
+|u_{\eta\eta\eta}|^2+|u_{\xi\xi}|^2+|{u}_{\eta\eta}|^2\\
&+|u_{\xi}|^2+|{u}_{\eta}|^2+|u|^2)\:d\xi d\eta\leq ||u||_{H^4(S)}^2.
\end{align*}
The constant $M$ in the above is independent of $N$ and this proves the Lemma.
\end{proof}
We now show that the quadratic form $\mathcal{C}({u})$ defined in (\ref{eq5.19}) as
\begin{align}
\mathcal{C}(u)=\int_S(&u_{\xi\xi\xi\xi}^2+u_{\eta\eta\eta\eta}^2
+u_{\xi\xi\xi}^2+u_{\eta\eta\eta}^2+u_{\xi\xi}^2+u_{\eta\eta}^2
+u_{\xi}^2+u_{\eta}^2+u^2)\:d\xi d\eta\;.\notag
\end{align}
can be diagonalized in the basis $\{\psi_{i,j}\}_{i,j}$. Here, ${u}$ is a polynomial
in $\xi$ and $\eta$ as defined in (\ref{eq5.8}). Let $\widetilde{\mathcal{C}}(f,g)$
denote the bilinear form induced by the quadratic form $\mathcal{C}({u})$. Then
\begin{align}\label{eq5.21}
\widetilde{\mathcal{C}}(f,g)=\int_Q&\left(f_{\xi\xi\xi\xi}g_{\xi\xi\xi\xi}
+f_{\eta\eta\eta\eta}g_{\eta\eta\eta\eta}+f_{\xi\xi\xi}
g_{\xi\xi\xi}+f_{\eta\eta\eta}g_{\eta\eta\eta}\right.\notag\\
&\left.+f_{\xi\xi}g_{\xi\xi}
+f_{\eta\eta}g_{\eta\eta}+f_{\xi}g_{\xi}+f_{\eta}g_{\eta}+fg\right)
\:d\xi d\eta.
\end{align}
Let $I$ denote the interval $(-1,1)$ and
\begin{equation}\label{eq5.10}
v(\xi) = \sum_{i=0}^{W}\beta_{i}L_{i}(\xi) \;.
\end{equation}
Define $b={(\beta_{0},\beta_{1},\ldots,\beta_{W})}^T $. Next, the quadratic forms $\mathcal{E}(v)$ and $\mathcal{F}(v)$ are defined as
\begin{equation}\label{eq5.11}
\mathcal{E}(v) = \int_{I}(v_{\xi\xi\xi\xi}^2+v_{\xi\xi\xi}^2
+v_{\xi\xi}^2+v_{\xi}^2 )d\xi\;,
\end{equation}
and
\begin{equation}\label{eq5.12}
\mathcal{F}(v) = \int_{I} v^2 d\xi \;.
\end{equation}
Apparently, there exist $(W+1)\times(W+1)$ matrices $E$ and $F$ such that
\begin{equation}\label{eq5.13}
\mathcal{E}(v) = b^T E b\;,
\end{equation}
and
\begin{equation}\label{eq5.14}
\mathcal{F}(v) = b^T F b.
\end{equation}
Here, the matrices $E$ and $F$ are symmetric and $F$ is positive definite.
\newline
Furthermore, there exist $W+1$ eigenvalues $0 \le \mu_0 \le \mu_1\le\cdots\le \mu_W$ and $W+1$
eigenvectors $b_0, b_1,\ldots,b_W$ of the symmetric eigenvalue problem
\begin{equation}\label{eq5.15}
(E-\mu F)b=0  \;.
\end{equation}
Therefore, we have
$$(E-\mu_i F)b_i=0\;.$$
Here the eigenvectors $b_i$ are normalized. Hence the following relations hold
\begin{subequations}\label{eq5.16}
\begin{equation}\label{eq5.16a}
b_i^T F b_j= \delta^i_j  \;.
\end{equation}
and
\begin{align}\label{eq5.16b}
b_i^T E b_j=\mu_i \delta^i_j\;,
\end{align}
\end{subequations}
Define $b_i = (b_{i,0}, b_{i,1},\ldots, b_{i,W})$. Next, the polynomial $\phi_i(\xi)$ is defined as
\begin{equation}\label{eq5.17}
\phi_i(\xi)=\sum_{j=0}^{W}b_{i,j}L_j(\xi)\:\:\mbox{for}\:\:0\le i\le W\:.
\end{equation}
Now, we define the polynomial $\psi_{i,j}$ which is as follows:
%Next, let $\psi_{i,j}$ denote the polynomial
\begin{equation}\label{eq5.18}
\psi_{i,j}(\xi,\eta)=\phi_i(\xi)\phi_j(\eta)\;,
\end{equation}
for\quad $0 \leq i \le W,\;0 \le j\le W$.\\
%Let $\mathcal{E}(v)$ and $\mathcal{F}(v)$, respectively, be the quadratic forms
%defined in (\ref{eq5.11}) and (\ref{eq5.12}) and
Let $\widetilde{\mathcal{E}}(f,g)$
and $\widetilde{\mathcal{F}}(f,g)$ denote the corresponding bilinear forms induced
by $\mathcal{E}(v)$ and $\mathcal{F}(v)$. Then
\begin{subequations}\label{eq5.22}
\begin{align}\label{eq5.22a}
\widetilde{\mathcal{E}}(f,g)=\int_{I}(f_{\xi\xi\xi\xi}g_{\xi\xi\xi\xi}
+f_{\xi\xi\xi}g_{\xi\xi\xi}+f_{\xi\xi}g_{\xi\xi}
+f_{\xi}g_{\xi})\:d\xi\;,
\end{align}
and
\begin{align}\label{eq5.22b}
\widetilde{\mathcal{F}}(f,g)=\int_{I}fg\:d\xi\:,
\end{align}
\end{subequations}
%Here, $I$ denotes the unit interval and
 where $f(\xi)$ and $g(\xi)$ are polynomials of degree
$W$ in $\xi$.
\newline
Furthermore, the relation
(\ref{eq5.16a}) and (\ref{eq5.16b}) become
\begin{subequations}\label{eq5.23}
\begin{align}\label{eq5.23a}
\widetilde{\mathcal{F}}(\phi_i,\phi_j)=\int_{I}\phi_i(\xi)\phi_j(\xi)
\:d\xi=\delta_{j}^{i}\:.
\end{align}
and
%Moreover, relation (\ref{eq5.16b}) may be stated as
\begin{align}\label{eq5.23b}
\widetilde{\mathcal{E}}(\phi_i,\phi_j)&=\int_{I}((\phi_i)_{\xi\xi\xi\xi}
(\phi_j)_{\xi\xi\xi\xi}+(\phi_i)_{\xi\xi\xi}(\phi_j)_{\xi\xi\xi}
+(\phi_i)_{\xi\xi}(\phi_j)_{\xi\xi}+(\phi_i)_{\xi}(\phi_j)_{\xi})\:d\xi
=\mu_{i}\delta_{j}^{i}.
\end{align}
\end{subequations}
%Recall that $\psi_{i,j}(\xi,\eta)=\phi_{i}(\xi)\phi_{j}(\eta)$ and
Combining (\ref{eq5.18}),
(\ref{eq5.23}) and (\ref{eq5.21}), it is easy to show that
\begin{align}
\widetilde{\mathcal{C}}(\psi_{i,j},\psi_{k,l})&=(\mu_i+\mu_j+1)\delta_{k}^{i}
\delta_{l}^{j}=\mu_{i,j}\delta_{k}^{i}\delta_{l}^{j}\:. \notag
\end{align}
Clearly, the eigenvectors of the quadratic form $\mathcal{C}(u)$ are $\{\psi_{i,j}\}_{i,j}$
and the eigenvalues are $\{\mu_{i,j}\}_{i,j}$ given by the relation
\begin{equation}\label{eq5.20}
\mu_{i,j}=\mu_i+\mu_j+1\;.
\end{equation}
Moreover, the quadratic form
$\mathcal{C}({u})$ can be diagonalized in the basis ${\{\psi_{i,j}\}}_{i,j}$ and
consequently the matrix corresponding to $\mathcal{C}({u})$ is easy to invert.
% \begin{table}[!ht]
% \begin{center}
%          \begin{tabular}{|c|c|c|}
%             \hline
%              \text{W} & \text{$\log_{10}(\kappa)$}\\
%             \hline
%             \hline
%              4 & 2.0297311717987707 \\
%             \hline
%              8 & 2.1140187559336927 \\
%             \hline
%              12 & 2.1632580185237087 \\
%             \hline
%              16 & 2.1913619432963762 \\
%             \hline
%              20 & 2.2098240850855990 \\
%             \hline
%              24 & 2.2230698082644300 \\
%             \hline
%              28 & 2.2341619222714453 \\
%             \hline
%              32 & 2.2390002544849740 \\
%             \hline
%         \end{tabular}
% \end{center}
% \caption{Condition number $\kappa$ as a function of W}
% \label{tab5.1}
% \end{table}
%Hence the eigenvectors of the quadratic form $\mathcal{C}(u)$ are $\{\psi_{i,j}\}_{i,j}$
%and the eigenvalues are $\{\mu_{i,j}\}_{i,j}$. Moreover the quadratic form
%$\mathcal{C}({u})$ can be diagonalized in the basis ${\{\psi_{i,j}\}}_{i,j}$ and
%consequently the matrix corresponding to $\mathcal{C}({u})$ is easy to invert.

Let $\kappa$ denotes the condition number of the preconditioned system obtained by using the
quadratic form $\mathcal{C}(u)$ as a preconditioner for the quadratic form $\mathcal{B}(u)$.
Then the values of $\kappa$ as a function of $W$ are shown in Table \ref{tab5.1}.

% \begin{table}[!ht]
% \begin{center}
%          \begin{tabular}{|c|c|c|}
%             \hline
%              \text{W} & \text{$\log_{10}(\kappa)$}\\
%             \hline
%             \hline
%              4 & 2.0297311717987707 \\
%             \hline
%              8 & 2.1140187559336927 \\
%             \hline
%              12 & 2.1632580185237087 \\
%             \hline
%              16 & 2.1913619432963762 \\
%             \hline
%              20 & 2.2098240850855990 \\
%             \hline
%              24 & 2.2230698082644300 \\
%             \hline
%              28 & 2.2341619222714453 \\
%             \hline
%              32 & 2.2390002544849740 \\
%             \hline
%         \end{tabular}
% \end{center}
% \caption{Condition number $\kappa$ as a function of W}
% \label{tab5.1}
% \end{table}

\begin{table}[!ht]
\begin{center}
         \begin{tabular}{|c|c|}
            \hline
             \text{W} & \text{$\log_{10}(\kappa)$}\\
            \hline

             4 & 2.029731 \\
            \hline
             8 & 2.114018 \\
            \hline
             12 & 2.163258 \\
            \hline
             16 & 2.191361 \\
            \hline
             20 & 2.209824 \\
            \hline
             24 & 2.223069 \\
            \hline
             28 &  2.234161 \\
            \hline
             32 & 2.239001 \\
            \hline
%             36 &  \\
%            \hline
        \end{tabular}
\end{center}
\caption{Condition number $\kappa$ as a function of W}
\label{tab5.1}
\end{table}

\section{Solution Techniques}
Let
\[{u}(\xi,\eta)=\sum_{i=0}^{W}\sum_{j=0}^{W}\beta_{i,j}L_i(\xi)L_j(\eta)\;,\]
and $\beta$ denotes the column vector whose components are $\beta_{i,j}$ arranged in
lexicographic order. Then there is a ${(W+1)}^2\times{(W+1)}^2$ matrix $C$ such that
$$\mathcal{C}({u})=\beta^T C \beta\;.$$
We now show (as in~\cite{DBR}) how to solve the system of equations
\begin{equation}\label{eq112}
C\beta =\rho \notag.
\end{equation}
Define a polynomial $r(\xi,\eta)$ corresponding to the vector $\rho$ is given by
\[r(\xi,\eta)=\sum_{i=0}^{W}\sum_{j=0}^{W}\rho_{i,j}L_{i}(\xi)L_{j}(\eta).\]
Here, the column vector $\rho$ is obtained by arranging the elements $\rho_{i,j}$ in lexicographic order.\\
Now by (\ref{eq5.17})
\[\phi_{i}(\xi)=\sum_{j=0}^{W}b_{i,j}L_{j}(\xi)\quad\mbox{for}\quad0\leq i\leq W.\]
Inverting the above relation we have
\begin{equation}\label{eq111}
L_{i}(\xi)=\sum_{j=0}^{W}\tilde{b}_{i,j}\phi_{j}(\xi).
\end{equation}
Using (\ref{eq111}), we may write
\[r(\xi,\eta)=\sum_{i=0}^{W}\sum_{j=0}^{W}\tilde{\rho}_{i,j}\;\phi_{i}(\xi)\;\phi_{j}(\eta).\]
Next, we define the polynomial $g(\xi,\eta)$ as
\begin{align*}
g(\xi,\eta)&=\sum_{i=0}^{W}\sum_{j=0}^{W}\frac{\tilde{\rho}_{i,j}}{\mu_{i,j}}\;\phi_{i}(\xi)\;\phi_{j}(\eta)\;,\\
&=\sum_{i=0}^{W}\sum_{j=0}^{W}\nu_{i,j}\;\phi_{i}(\xi)\;\phi_{j}(\eta).
\end{align*}
Here, $\nu_{i,j} = \frac{\tilde{\rho}_{i,j}}{\mu_{i,j}}$. Now
\[g(\xi,\eta)=\sum_{i=0}^{W}\sum_{j=0}^{W}\beta_{i,j}\;L_{i}(\xi)\;L_{j}(\eta).\]
We can obtain $\{\beta_{i,j}\}_{i,j}$ from $\{\nu_{i,j}\}_{i,j}$ using relation (\ref{eq5.17}). Clearly, we
can solve the system of equations (\ref{eq112}) in $O(N^3)$ operations.

Let $u(\xi,\eta)$ be a polynomial as defined in (\ref{eq5.8}) which vanishes at the vertices of the square
$S = (-1, 1)\times(-1, 1)$.
\newline
Let
\[V_{1}(\xi)=\frac{(1-\xi)}{2}=\frac{L_{0}(\xi)-L_{1}(\xi)}{2},\]
\[V_{2}(\xi)=\frac{(1+\xi)}{2}=\frac{L_{0}(\xi)-L_{1}(\xi)}{2},\]
\begin{equation}\label{eq113}
V_{i}(\xi)=\sqrt{\frac{2i-3}{2}}\int_{-1}^{\xi}L_{i-2}(s)ds=\frac{1}{2\sqrt{(2i-3)}}(L_{i-1}(\xi)-L_{i-3}(\xi))
\quad \mbox{for}\quad\leq i\leq W+1,
\end{equation}
denote the hierarchic shape functions as defined in~\cite{SCHW}. Then
\[V(\pm 1)=0\quad \mbox{for}\quad 3\leq i\leq W+1\;,\]
and $V_{1}(\xi)$ vanishes at $\xi = 1$. Moreover, $V_{2}(\xi)$ vanishes at $\xi=-1$. Let
\[\omega(\xi)=\sum_{i=3}^{W+1}\gamma_{i}V_{i}(\xi)\;,\]
and $\mathcal{E}(\omega)$ and $\mathcal{F}(w)$ be the quadratic forms defined in (\ref{eq5.11}) and (\ref{eq5.12}).
Clearly, there exist $W - 1\times W - 1$ matrices $\tilde{{E}}$ and $\tilde{{F}}$ such that
\[\mathcal{E}=c^{T}\tilde{E}c\;,\]
and
\[\mathcal{F}=c^{T}\tilde{F}c.\]
Here, $c$ denotes the vector
\[c=[\gamma_{3},\gamma_{4},\ldots,\gamma_{W+1}]^{T}.\]
Moreover, the matrices $\tilde{E}$ and $\tilde{F}$ are symmetric and $\tilde{F}$ is positive definite.
Hence, there exist $W - 1$ eigenvalues
\[0\leq\nu_{3}\leq \nu_{4}\leq\cdots\leq\nu_{W+1}\;,\]
of the symmetric eigenvalue problem
\begin{equation}\label{eq114}
(\tilde{E}-\nu\tilde{F})c=0.
\end{equation}
Let $c_{i}$ be the eigenvector corresponding to the eigenvalue $\nu_{i}$ . Then
\begin{equation}\label{eq115}
(\tilde{E}-\nu_{i}\tilde{F})c_{i}=0.
\end{equation}
Moreover the eigenvectors $c_{i}$ are normalized so that
\begin{equation}\label{eq116}
c_{i}^{T}\tilde{F}c_{j}=\delta^{i}_{j}.
\end{equation}
In addition the relations
\begin{equation}\label{eq117}
c_{i}^{T}\tilde{E}c_{j}=\nu_{i}\delta^{i}_{j}\;,
\end{equation}
hold. Here $c_{i}=[c_{i,3},c_{4},\ldots,c_{i,W+1}]^{T}.$\\
We now define the polynomials
\begin{equation}\label{eq118}
h_{i}(\xi)=\sum_{j=3}^{W+1}c_{i,j}V_{j}(\xi)\quad \mbox{for}\quad 3\leq i\leq W+1.
\end{equation}
Then
\begin{subequations}\label{eq119}
\begin{align}\label{eq119a}
\int_{-1}^{1}h_{i}h_{j} d\xi = \delta^{i}_{j}\;,
\end{align}
and
\begin{align}\label{eq120b}
\int_{-1}^{1}((h_{i})_{\xi\xi\xi\xi}(h_{j})_{\xi\xi\xi\xi}+(h_{i})_{\xi\xi\xi}(h_{j})_{\xi\xi\xi}
+(h_{i})_{\xi\xi}(h_{j})_{\xi\xi}+(h_{i})_{\xi}(h_{j})_{\xi})d\xi=\nu_{i}\delta^{i}_{j}.
\end{align}
\end{subequations}
Now consider the bilinear form
\[\widetilde{\mathcal{C}}(f,g)=\int_{S}(f_{\xi\xi\xi\xi}g_{\xi\xi\xi\xi}+f_{\eta\eta\eta\eta}g_{\eta\eta\eta\eta}
+f_{\xi\xi\xi}g_{\xi\xi\xi}+f_{\eta\eta\eta}g_{\eta\eta\eta}+f_{\xi\xi}g_{\xi\xi}+f_{\eta\eta}g_{\eta\eta}
+f_{\xi}g_{\xi}+f_{\eta}g_{\eta})d\xi d\eta\;,\]
defined in (\ref{eq5.21}).
\newline
Let $P_{i,j}$ denote the polynomial
\begin{equation}\label{eq121}
P_{i,j}(\xi,\eta)=h_{i}(\xi)h_{j}(\eta)\;,
\end{equation}
for $3\leq i,j\leq W+1$.
Then, using relations (\ref{eq119}) it is easy to show that
\begin{equation}\label{eq122}
\widetilde{\mathcal{C}}(P_{i,j},P_{k,l})=(\nu_{i}+\nu_{j}+1)\delta^{i}_{j}\delta^{k}_{l}.
\end{equation}
Now, if $u(\xi,\eta)$ is a polynomial as defined in (\ref{eq5.8}) which vanishes at the vertices of
the square $S$ then it has the representation
\begin{align}\label{eq123}
u(\xi,\eta)&=\sum_{i=3}^{W+1}\sum_{j=3}^{W+1}o_{i,j} h_{i}(\xi)h_{j}(\eta) +\sum_{i=3}^{W+1}e_{i} h_{i}(\xi)V_{1}(\eta)+\sum_{i=3}^{W+1}f_{i} h_{i}(\xi)V_{2}(\eta)\notag\\
&+\sum_{j=3}^{W+1}g_{j} V_{1}(\xi)h_{j}(\eta)+\sum_{j=3}^{W+1}h_{j} V_{2}(\xi)h_{j}(\eta).
\end{align}
Or, we may write
\begin{equation}\label{eq124}
\begin{split}
u(\xi,\eta)=\sum_{i=3}^{W+1}\sum_{j=3}^{W+1}o_{i,j} P_{i,j}(\xi,\eta)+\sum_{i=1}^{4W-4}q_{i} R_{i}(\xi,\eta).
\end{split}
\end{equation}
Here $\{R(\xi,\eta)\}_{i=1,4W-4}$ denote the polynomials $\{h_{i}(\xi)V_{1}(\eta)\}_{i=3,\ldots,W+1}, \{h_{i}(\xi)V_{2}(\eta)\}_{i=3,\ldots,W+1}, \\ \{V_{1}(\xi)h_{j}(\eta)\}_{j=3,\ldots,W+1}$ and $\{V_{1}(\xi)h_{j}(\eta)\}_{j=3,\ldots,W+1}$.\\
Let $A$ denote the matrix of the bilinear form $\widetilde{\mathcal{C}}(f, g)$ in the basis consisting
of $\{P_{i,j} (\xi, \eta)\}_{i,j}, \{R_{i} (\xi, \eta)\}_{i}$. Then
\[ A=\left[ \begin{array}{cc}
D & E  \\
E^{T} & F  \\
\end{array} \right].\]
Here $D$ is a $(W - 1)^2\times(W - 1)^2$ matrix and $E$ is a $(W - 1)^2\times(4W - 4)$ matrix. Moreover
$F$ is a $(4W - 4)\times(4W - 4)$ matrix.
\newline
Let $o$ denote the vector whose components are $o_{i,j}$ arranged in lexicographic order and $q$ the vector
whose components are $q_{i}$ . Here $\{o_{i,j}\}_{i,j}$ and $\{q_{i} \}_{i}$ are as in (\ref{eq124}). Let
$p$ denote the vector
\[ p=\left[ \begin{array}{c}
o  \\
q  \\
\end{array} \right],\]
and $z$ denote the vector
\[ z=\left[ \begin{array}{c}
x  \\
y  \\
\end{array} \right].\]
We now wish to solve the system of equations
\begin{equation}\label{eq125}
Ap=z.
\end{equation}
Define the Schur complement $S$ of the system of equations (\ref{eq125}) as
\[S = F - E^{T}D^{-1}E.\]
Then $S$ is a $(4W - 4)\times(4W - 4)$ matrix.\\
To obtain the solution $p$ of (\ref{eq125}) we first solve
\begin{equation}\label{eq126}
Sq = y - E^{T}D^{-1}x.
\end{equation}
Next, we compute
\begin{equation}\label{eq127}
o=D^{-1}(x-Eq).
\end{equation}
It is easy to see that the system of equations (\ref{eq125}) can be solved in $O(W^{3})$ operations.
%Moreover the bounds on the condition number given in Table $1$ will continue to hold.

\section{Conclusions}
Preconditioners for fourth order elliptic using least-squares spectral element methods have been
presented  in this paper. Numerical results presented in Table $1$ demonstrate the effectiveness
of these preconditioners.
We remark that the results and the method presented in this paper can be easily extended with
obvious modifications to three-dimensional models and more general differential operators (e.g.,
variable coefficients, higher order, multidimensional differential operator).

\end{document}